\newcommand{\rr}{\mathbb R}
\newcommand{\bs}{\mathbb S}
\newcommand{\bb}{\mathbb B}
\newcommand{\A}{\mathcal A}
\newcommand{\E}{\mathbb E}
\newcommand{\al}{\alpha}
\newcommand{\gm}{\gamma}
\newcommand{\dl}{\delta}
\newcommand{\ey}{\frac{1}{2}}
\newcommand{\gmd}{\dot{\gm}}
\newcommand{\ac}{\mathcal{C}}
\newcommand{\vv}{\underline{v}}
\newcommand{\dv}{\dl \underline{v}}
\newcommand{\ttl}{\tilde{t}}
\newcommand{\lnm}{\left \|}
\newcommand{\rnm}{\right \|}
\newcommand{\cal}{C_{\al, v}}
\newcommand{\fal}{f_{\al}}
\theoremstyle{plain}
\newtheorem{thm}{Theorem}[section]
\newtheorem{prop}{Proposition}[section]
\newtheorem{lem}{Lemma}[section]
\theoremstyle{definition}
\newtheorem{dfn}{Definition}[section]
\theoremstyle{remark}
\newtheorem{rem}{Remark}[section]
\begin{document}

\title[hyperbolic motions]{Hyperbolic motions in the $N$-body problem with homogeneous potentials}
 

\author{Guowei Yu}
\address{Chern Institute of Mathematics and LPMC, Nankai University, Tianjin, China}
\email{yugw@nankai.edu.cn}

\thanks{This work is supported by the National Key R\&D Program of China (2020YFA0713303), NSFC (No. 12171253), the Fundamental Research Funds for the Central Universities and Nankai Zhide Fundation.}

\begin{abstract}
In the $N$-body problem, a motion is called hyperbolic, when the mutual distances between the bodies go to infinity with non-zero limiting velocities as time goes to infinity. For Newtonian potential, in \cite{MV20} Maderna and Venturelli proved that starting from any initial position there is a hyperbolic motion with any prescribed limiting velocities at infinity. 

Recently based on a different approach, Liu, Yan and Zhou \cite{LYZ21} generalized this result to a larger class of $N$-body problem. As the proof in \cite{LYZ21} is quite long and technical, we give a simplified proof for homogeneous potentials following the approach given in the latter paper.  
\end{abstract}
{}
\maketitle

\section{Introduction}

Consider the $N$-body problem in $\rr^d$ ($d \ge 2$) with $\al$-homogeneous potential as below
\begin{equation}
\label{eq: U} U(q) = \sum_{1 \le i < j \le N} \frac{m_i m_j}{|q_i -q_j|^{\al}}, \;\; \al \in (0, 2).
\end{equation}
With $m_i$ and $q_i$ representing the mass and position of the $i$-th body, motion of the bodies can be described by the following equation
\begin{equation}
\label{eq;Nbody} m_i \ddot{q}_i = - \al \sum_{j=1, j \ne i}^N \frac{m_i m_j(q_i -q_j)}{|q_i -q_j|^{\al +2}}, \;\; i =1, \dots, N. 
\end{equation}
When $\al =1$, it is the Newtonian $N$-body problem. 

Define the mass weighted inner product and norm on the configuration space $\E=\rr^{dN}$ as
$$ \ll x, y \gg = \sum_{i =1}^N m_i\langle x_i, y_i \rangle, \; \|x \| = \sqrt{\ll x, x \gg}. $$
$\bs = \{ x \in \E: \; \|x \|=1\}$ represents the set of normalized configurations.

Since the potential function $U$ is not well-define on the collision set 
\begin{equation}
\label{eq; Delta} \Delta = \{q \in \E: \; q_i = q_j \text{ for some } 1 \le i \ne j \le N\}. 
\end{equation} 
Given an initial condition, the corresponding solution of \eqref{eq;Nbody} may not exist after a finite time. However it is believed (see \cite{Saari71} and \cite{Knauf19}), these initial conditions should form a set with measure zero in the phase space. Now a fundamental question is what are the possible final motions of the bodies as time goes to infinity. This was first studied by Chazy in \cite{Chazy22}. Among all possible final motions, one is called \emph{hyperbolic}, which can be defined as below according to Chazy. 
\begin{dfn}
A solution $q(t)$, $t \in [t_0, \infty)$, of \eqref{eq;Nbody} is a hyperbolic motion, if there is a limiting velocity $\xi \in \hat{\E} = \E \setminus \Delta$ as $t \to \infty$, i.e.,  
\begin{equation}
\label{eq; lim-xi}  \lim_{t \to \infty} \dot{q}(t) = \xi \text{ or } q(t) = \xi t + o(t), \; \text{ as } t \to \infty. 
\end{equation} 
\end{dfn}
Recall that energy $E(q(t))$ is conserved along a solution   
\begin{equation}
\label{eq; energy} E(q(t)) = \ey \|\dot{q}(t)\|^2 - U(q(t)). 
\end{equation}
If $h$ is the energy of a hyperbolic motion $q(t)$, then it must be positive, as
$$ h = \lim_{t \to \infty} E(q(t)) = \lim_{t \to \infty} \left(\ey \|\dot{q}(t)\|^2 - U(q(t) \right) = \ey \|\xi\|^2 >0. $$  
Then we can rewrite the asymptotic expression in \eqref{eq; lim-xi} as below, if  $v = \xi /\sqrt{2h} \in \hat{\bs}= \bs \setminus \Delta$,
\begin{equation}
\label{eq;lim-v} q(t) = \sqrt{2h} v t + o(t), \; \text{ as } t \to \infty. 
\end{equation}
This means $v$ is also the limiting shape of the normalized configuration of $q(t)$, as  
$$ \lim_{t \to \infty} q(t)/\|q(t)\| = v. $$

For the Newtonian $N$-body problem, in \cite{MV20} Maderna and Venturilli showed that starting from any initial configuration, there are hyperbolic motions with any asymptotic limiting velocities from $\hat{\E}$ at infinity, as stated in Theorem \ref{thm;hyper}. The motions were found as free-time minimizers based on the weak KAM theory. 
A crucial part of the proof is to establish the asymptotic expression \eqref{eq;lim-v}. In \cite{MV20}, this was done indirectly using the continuous property of the limiting shape of hyperbolic motions at infinity, which were given by Chazy.

Recently in \cite{LYZ21}, Liu, Yan and Zhou gave a different proof of this result. In particular, they found a way to show the asymptotic expression directly. Moreover their approach works for a much larger class of $N$-body problem, as long as $U(q)$ satisfies certain conditions (including the $\al$-homogeneous potential), for the details see \cite[Theorem 1.4 ]{LYZ21}. To obtained such a general result, the details in \cite{LYZ21} are quite long and technical. 

Since the $\al$-homogeneous potential is most interesting to us and using the Lagrange-Jacobi identity (Lemma \ref{lem;Lag-Jacobi}), one can significantly simplify the proof in Section 5 of \cite{LYZ21} (see Lemma \ref{lem;dist-estimate}). We feel it is worth to write down this short note. However it must be emphasized that the main idea of our proof is from \cite{LYZ21}.

\begin{thm} \label{thm;hyper}
Given arbitrarily a positive energy $h$ and asymptotic configuration $v \in \hat{\bs}$. For any $x \in \E$, there is a hyperbolic motion $\gm: [t_0, \infty) \to \E$, which is collision-free of \eqref{eq;Nbody} for all $t \ne 0$ and satisfies
$$ \gm(t_0) =x, \;\;  \gm(t) = \sqrt{2h}t v + o(t), \; \text{ as } t \to \infty. $$
\end{thm}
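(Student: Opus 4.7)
My plan is to construct $\gm$ as the limit of a sequence of fixed-endpoint Lagrangian minimizers. For each $T > t_0$, let $\gm_T : [t_0, T] \to \E$ minimize the action $A(\gm) = \int_{t_0}^T (\ey \|\dot\gm\|^2 + U(\gm))\,dt$ among absolutely continuous paths with $\gm(t_0) = x$ and $\gm(T) = \sqrt{2h}\,T v$. Existence follows from Tonelli's theorem, absence of interior collisions from Marchal's theorem, and on the non-collision set $\gm_T$ solves \eqref{eq;Nbody} with a conserved energy $E_T$.

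Next I would establish the upper bound $A(\gm_T) \le hT + o(T)$ by testing against an explicit path---essentially the straight line from $x$ to $\sqrt{2h}\,Tv$, modified on a bounded initial segment if needed to avoid $\Delta$. Since $v \in \hat\bs$ is collision-free, the integral of $U$ along this test path is $o(T)$, while its kinetic part is $\ey\|\sqrt{2h}\,Tv - x\|^2/(T-t_0) = hT + O(1)$. Combined with $U \ge 0$, this yields a uniform bound on the integrated kinetic energy and hence equicontinuity of $\{\gm_T\}$ on every compact subinterval of $[t_0, \infty)$. A diagonal extraction along a sequence $T_n \to \infty$ produces a limit $\gm : [t_0, \infty) \to \E$ satisfying \eqref{eq;Nbody} on $(t_0, \infty)$, with $\gm(t_0) = x$ and conserved energy $E$.

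For the asymptotic behaviour I appeal to the Lagrange-Jacobi identity. Setting $I(t) = \ey\|\gm(t)\|^2$, homogeneity of $U$ together with \eqref{eq;Nbody} gives
$$\ddot I(t) = \|\dot\gm\|^2 + \ll \gm, \ddot\gm \gg = 2E + (2+\al)\,U(\gm(t)) \ge 2E,$$
so $I$ is strictly convex with $I(t) \ge Et^2 + O(t)$. On the other hand, Cauchy-Schwarz applied to each $\gm_T$, combined with the identity $\int \|\dot\gm_T\|^2\,dt = A(\gm_T) + E_T(T-t_0)$ and the upper action bound, forces $E_T \ge h - o(1)$ and hence $E \ge h$ in the limit. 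Once $\gm(t)$ escapes to infinity one has $U(\gm(t)) \to 0$, so $\ddot I(t) \to 2E$ and $I(t)/t^2 \to E$; pinching against an action-based upper bound on $I$ forces $E = h$ and $\|\gm(t)\|/t \to \sqrt{2h}$.

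The hard part, which I expect to be the main obstacle, is to upgrade this norm convergence to the vectorial statement $\gm(t)/t \to \sqrt{2h}\,v$---nothing above pins down the direction. Here I would exploit the minimizing property of the approximants: splitting $A(\gm_{T_n}) = A(\gm_{T_n}|_{[t_0, t]}) + A(\gm_{T_n}|_{[t, T_n]})$ and applying Cauchy-Schwarz to the tail yields
$$\ey\,\frac{\|\sqrt{2h}\,T_n v - \gm_{T_n}(t)\|^2}{T_n - t} \le A(\gm_{T_n}) - A(\gm_{T_n}|_{[t_0, t]}),$$
and inserting the asymptotics $A(\gm_{T_n}) = hT_n + o(T_n)$, $A(\gm_{T_n}|_{[t_0, t]}) = ht + o(t)$ for $t = \ep T_n$ with small $\ep$ pins $\gm_{T_n}(\ep T_n) = \sqrt{2h}\,\ep T_n v + o(T_n)$; the strict convexity $\ddot I \ge 2h$ furnished by Lagrange-Jacobi then propagates this to the full asymptote of the limit $\gm$. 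This direction-identification step is precisely where Section 5 of \cite{LYZ21} becomes technical, and the promised simplification (cf.\ Lemma \ref{lem;dist-estimate}) consists in using the convexity of $I$ supplied automatically by homogeneity of $U$ in place of the more intricate shape estimates on $\gm(t)/\|\gm(t)\|$ that the general-potential case requires.
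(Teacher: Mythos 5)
Your outline diverges from the paper's method at the level of setup --- you minimize the fixed-time unmodified action with endpoint $\sqrt{2h}\,Tv$ at time $T$, whereas the paper takes $h$-free-time minimizers of $\A_h$ to $y_n = 2^n v$, which automatically carry energy $h$ --- but this difference is not the problem, and your Cauchy--Schwarz argument for $E_T \to h$ is fine. The decisive gap is the step ``once $\gm(t)$ escapes to infinity one has $U(\gm(t)) \to 0$.'' This implication fails in the $N$-body problem: $\|\gm(t)\| \to \infty$ does not prevent two of the bodies from approaching each other while the whole configuration escapes, so $U(\gm(t))$ can stay bounded away from zero (or blow up). Excluding exactly this near-collision behaviour is why the paper proves the quantitative shape estimate $\lnm \gm(t)/\|\gm(t)\| - u \rnm \le C_{\al, v}\,\fal(\|\gm(t)\|)$ of Lemma \ref{lem;dist-estimate} and propagates it backward along the dyadic scales in Lemma \ref{lem;dist-gmn-v}, which guarantees $\gm_n(t)/\|\gm_n(t)\| \in \bb_{\dl\vv}(v)$ and hence, by Lemma \ref{lem;v-u}, the pointwise bound $U(\gm_n(t)) \le 2^{\al}U(v)\|\gm_n(t)\|^{-\al}$. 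Nothing in your proposal produces a pointwise shape bound.

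The action-splitting argument you offer for vectorial convergence has the same missing ingredient. To assert $A(\gm_{T_n}|_{[t_0, \ep T_n]}) = h\ep T_n + o(T_n)$ you would need $\int_{t_0}^{\ep T_n} U(\gm_{T_n})\,ds = o(T_n)$ on the intermediate scale, but the test-path comparison only yields the global integrated bound $\int_{t_0}^{T_n} U(\gm_{T_n})\,ds = o(T_n)$, which says nothing about where along $[t_0, T_n]$ that potential mass sits; one again needs a pointwise estimate. You have also misattributed the paper's simplification: Lemma \ref{lem;Lag-Jacobi} is invoked only to show that $\|\gm(t)\|$ is strictly increasing past the level $\ey\|y\|$, so that the dyadic times $t_{n,k}$ are well defined and the induction in Lemma \ref{lem;dist-gmn-v} can proceed. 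It does not replace the geometric hyperplane argument underlying Lemma \ref{lem;dist-estimate}, which remains the backbone of the proof. (A minor slip: the Lagrange--Jacobi identity gives $\ddot I = 2E + (2-\al)U$, not $(2+\al)U$; the inequality $\ddot I \ge 2E$ is unaffected since $\al \in (0,2)$.)
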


In a recently preprint by Polimeni and Terracini \cite{PT23}, another approach to prove the above theorem when $\al=1$ was given. Their idea is to look for minimizer of a normalized Lagrangian action with the desired asymptotic behavior. The advantage of such an approach is it can also be used to prove a similar result for hyperbolic-parabolic motion.

\section{Preliminary}
For any $x, y \in \E$ and $T>0$, let $\ac(x, y;T)$ denote the set of absolute continuous paths defined on $[0, T]$, which go from $x$ to $y$, and $\ac(x,y) = \cup_{T>0} \ac(x, y;T)$. For any $\gm \in \ac(x, y; T)$, we define its length and $h$-modified Lagrangian action value as 
\begin{equation}
\label{eq;length-gm} \ell(\gm|_{[0, T]}) = \int_{0}^{T} \|\dot{\gm}\| \,dt \; \text{ and } \;  \A_h(\gm; 0, T) = \int_{0}^T \left( \ey \|\dot{\gm}\|^2 + U(\gm) +h \right) \,dt.
\end{equation}

\begin{lem}
\label{lem;length-phi} For any $\gm \in \ac(x, y; T)$, $\|y -x\| \le \ell(\gm|_{[0, T]}) \le \frac{1}{\sqrt{2h}}\A_h(\gm; 0, T)$. 
\end{lem}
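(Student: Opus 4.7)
The plan is to establish the two inequalities separately, since each is a short pointwise estimate integrated over $[0,T]$.

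For the lower bound $\|y - x\| \le \ell(\gm|_{[0,T]})$, I would use the fundamental theorem of calculus on the absolutely continuous path $\gm$, writing $y - x = \int_0^T \dot{\gm}(t)\, dt$ and then applying the triangle inequality for the $\| \cdot \|$ norm on $\E$ to pull the norm inside the integral:
\begin{equation*}
\|y - x\| = \left\| \int_0^T \dot{\gm}(t)\, dt \right\| \le \int_0^T \|\dot{\gm}(t)\|\, dt = \ell(\gm|_{[0,T]}).
\end{equation*}

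For the upper bound $\ell(\gm|_{[0,T]}) \le \frac{1}{\sqrt{2h}}\A_h(\gm; 0, T)$, the idea is a pointwise AM--GM estimate on the integrand. Since $U(\gm) \ge 0$ on $\hat{\E}$ (and is $+\infty$ on $\Delta$, which is harmless here), at every $t$ where $\dot\gm$ exists we have
\begin{equation*}
\ey \|\dot{\gm}\|^2 + U(\gm) + h \;\ge\; \ey \|\dot{\gm}\|^2 + h \;\ge\; 2\sqrt{\ey \|\dot\gm\|^2 \cdot h} \;=\; \sqrt{2h}\, \|\dot{\gm}\|.
\end{equation*}
Integrating over $[0,T]$ yields $\A_h(\gm; 0, T) \ge \sqrt{2h}\, \ell(\gm|_{[0,T]})$, which is exactly the desired inequality after dividing by $\sqrt{2h} > 0$.

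Both steps are routine and I do not anticipate any serious obstacle; the only minor point to be careful about is that $\gm$ is only absolutely continuous, so $\dot{\gm}$ exists merely almost everywhere, but this is enough to justify both the fundamental theorem of calculus (since $\gm$ is AC) and the pointwise AM--GM bound inside the Lebesgue integral. The positivity of $U$ (inherited from \eqref{eq: U}) and the strict positivity of $h$ are the only structural inputs used.
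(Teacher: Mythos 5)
Your proof is correct and follows essentially the same route as the paper: the lower bound is the standard fundamental theorem of calculus plus triangle inequality for integrals, and the upper bound drops $U \ge 0$ and applies the pointwise AM--GM inequality $\ey\|\dot\gm\|^2 + h \ge \sqrt{2h}\|\dot\gm\|$ before integrating. Nothing further is needed.
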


\begin{proof}
While the first inequality is obvious, the second one follows from
$$ \A_h(\gm; 0, T) \ge \int_0^T \left( \ey \|\gmd\|^2 + h  \right) \,dt \ge \sqrt{2h} \int_0^T \|\gmd \| \,dt. $$
\end{proof}

\begin{dfn}
We say $\gm \in \ac(x, y; T)$ is a minimizer, if $\A_h(\gm; 0, T)= \phi_h(x, y; T)$, and an $h$-free-time minimizer, if $\A_h(\gm; 0, T) = \phi_h(x, y)$, where
$$ \phi_h(x, y;T) = \inf\{ \A_h(\gm; 0, T): \; \gm \in \ac(x, y;T); $$
$$ \phi_h(x, y) = \inf \{\phi_h(x, y; T): \; T>0 \}. $$
\end{dfn}

\begin{prop}
\label{prop;Marchal} If $\gm \in \ac(x, y; T)$ is a minimizer, $\gm|_{(0, T)}$ is a collision-free solution of \eqref{eq;Nbody}. 
\end{prop}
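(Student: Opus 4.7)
The plan is to recognize this as Marchal's theorem tailored to the $\al$-homogeneous potential \eqref{eq: U}. The proof naturally splits into two steps: first, an interior regularity argument showing that $\gm$ satisfies the Euler--Lagrange equations on every open subinterval where it avoids $\Delta$; second, and more substantially, an exclusion step ruling out any interior collision.

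For the regularity step, pick any $t^* \in (0, T)$ with $\gm(t^*) \notin \Delta$. On a neighborhood of $t^*$ the Lagrangian $L(q, \dot q) = \ey \|\dot q\|^2 + U(q) + h$ is smooth. For any smooth compactly supported variation $\eta$ whose support lies in an interval on which $\gm$ stays away from $\Delta$, the perturbed curve $\gm + s\eta$ belongs to $\ac(x,y;T)$ for $|s|$ small and minimality of $\gm$ gives $\frac{d}{ds}\A_h(\gm + s\eta; 0, T)\big|_{s=0} = 0$. A standard Du Bois-Reymond argument then promotes $\gm$ to a $C^2$ solution of \eqref{eq;Nbody} on the chosen neighborhood. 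Hence $\gm$ solves Newton's equations on the open set $\{t \in (0,T): \gm(t) \notin \Delta\}$.

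The hard part is showing that this set is all of $(0,T)$, i.e., that $\gm$ has no interior collisions. This is exactly Marchal's theorem, originally proven for $\al = 1$ and extended to the full range $\al \in (0, 2)$ of homogeneous potentials relevant here; the form I would invoke is the one used in \cite{MV20} and \cite{LYZ21}. The mechanism is an averaging argument: if $\gm(t^*) \in \Delta$ for some $t^* \in (0, T)$, one builds a competing path by averaging $\gm$ over small rotations (or translations of a colliding cluster's center of mass) on a short interval $[t^* - \ep, t^* + \ep]$. Because the endpoints match and the kinetic term is quadratic, the kinetic contribution changes by at most $O(\ep)$ times a bounded factor, while the strict subharmonicity of $|q_i - q_j|^{-\al}$ near the collision locus forces a strict decrease in $\int U(\gm) \, dt$, yielding an action strictly less than $\A_h(\gm; 0, T)$ and contradicting minimality.

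Combining the two steps shows that $\gm|_{(0,T)}$ remains entirely outside $\Delta$ and satisfies \eqref{eq;Nbody} there, as claimed. The main obstacle is the second step, but as this is a classical and by now standard result for $\al$-homogeneous potentials, the present proof will reduce to citing it rather than reproducing the averaging computation.
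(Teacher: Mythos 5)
Your proposal is correct and takes the same route as the paper: both treat the statement as Marchal's theorem and defer to the literature (the paper cites Chenciner for $\al=1$ and Ferrario--Terracini for $\al\in(0,2)$) rather than reprove it. Your sketch of the two-step structure (interior Euler--Lagrange regularity off $\Delta$, then exclusion of interior collisions by the averaging/local-deformation argument) accurately summarizes the mechanism behind the cited result, but adds nothing the paper needs beyond the citation it already gives.
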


\begin{rem}
This is the well-known \emph{Machal's Lemma}. A proof can be found in \cite{C02} for $\al=1$ and in \cite{FT04} for $\al \in (0, 2)$. Notice that $\gm(0)$ or $\gm(T)$  may contain collision,  if we choose $x$ or $y \in \Delta$.  
\end{rem}

\begin{lem}
\label{lem;h-free-time} When $h>0$ and $x \ne y \in \E$, the following properties hold. 
\begin{enumerate}
\item[(a).] There exists a $\gm \in \ac(x, y;T)$ satisfying $\A_h(\gm; 0, T) = \phi_h(x, y)$. 
\item[(b).] $\gm|_{(0, T)}$ is collision-free solution of \eqref{eq;Nbody} with energy $E(\gm(t)) \equiv h$.
\item[(c).] For any $z \in \E$, $\phi_h(x, y) \le \phi_h(x, z) + \phi_h(z, y)$. 
\end{enumerate}
\end{lem}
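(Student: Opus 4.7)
The plan is to prove the three claims in the order (c), (a), (b), since (c) is essentially formal, (a) requires the direct method of the calculus of variations, and (b) will follow from (a) together with Marchal's lemma and a time-rescaling argument. For (c), I would pick free-time minimizers $\gm_1 \in \ac(x,z;T_1)$ and $\gm_2 \in \ac(z,y;T_2)$ supplied by (a), concatenate them into a path in $\ac(x,y;T_1+T_2)$ whose action equals $\A_h(\gm_1;0,T_1) + \A_h(\gm_2;0,T_2) = \phi_h(x,z) + \phi_h(z,y)$, and take infima to obtain the inequality.

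For the existence statement (a), the first task is to confine a minimizing sequence $(\gm_n, T_n)$ to a compact range of $T$. Since $x \ne y$, Lemma \ref{lem;length-phi} gives $\phi_h(x,y) \ge \sqrt{2h}\,\|y-x\| > 0$, while $\phi_h(x,y)$ is finite because any collision-free path has finite action and one exists: a small perturbation of the line segment from $x$ to $y$ avoids $\Delta$, whose real codimension in $\E$ is at least $d \ge 2$. The bounds $\A_h(\gm;0,T) \ge hT$ and, by Cauchy--Schwarz against $\int_0^T\|\gmd\|\,dt \ge \|y-x\|$, $\A_h(\gm;0,T) \ge \|y-x\|^2/(2T)$ then trap $T_n$ in a compact interval $[T_1,T_2] \subset (0,\infty)$. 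Reparametrizing to $[0,1]$ via $\tilde\gm_n(s) = \gm_n(T_n s)$, the uniform kinetic bound yields uniform equicontinuity and a weak $L^2$ bound on the derivatives, so Arzel\`a--Ascoli together with Banach--Alaoglu produce, along a subsequence with $T_n \to T^*$, a limit $\tilde\gm^*$ inheriting the endpoints; rescaling back to $[0,T^*]$ yields $\gm^* \in \ac(x,y;T^*)$. Lower semicontinuity of the action then follows from convexity of the kinetic term and Fatou's lemma applied to the nonnegative lower semicontinuous potential (extended to $+\infty$ on $\Delta$), so $\gm^*$ realizes $\phi_h(x,y)$.

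For (b), Marchal's lemma (Proposition \ref{prop;Marchal}) immediately delivers that $\gm^*|_{(0,T^*)}$ is collision-free and satisfies \eqref{eq;Nbody}, so the energy $E(\gm^*(t))$ is a constant; to identify it with $h$, I would exploit the free choice of $T$ by considering $\gm_\ep(t) = \gm^*((1+\ep)t)$ on $[0,T^*/(1+\ep)]$, computing after substitution $s=(1+\ep)t$ that
\[ \A_h\bigl(\gm_\ep;0,T^*/(1+\ep)\bigr) = \int_0^{T^*} \Bigl[ \tfrac{1+\ep}{2}\|\dot{\gm}^*\|^2 + \tfrac{1}{1+\ep}\bigl(U(\gm^*)+h\bigr) \Bigr]\,ds, \]
and setting the $\ep$-derivative at $\ep = 0$ to zero. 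This gives $\int_0^{T^*} \bigl( \ey\|\dot{\gm}^*\|^2 - U(\gm^*) - h \bigr)\,ds = 0$, which combined with the constancy of $E$ forces $E \equiv h$.

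The step I expect to demand the most care is the lower semicontinuity in (a): the limit $\gm^*$ could in principle touch $\Delta$ at isolated times, and preserving the full action value across such moments requires treating $U$ as a $[0,+\infty]$-valued lower semicontinuous function and applying Fatou to a further a.e.\ convergent subsequence. Ruling out ``loss of action at collisions'' is exactly what keeps the direct method viable here; once this subtlety is handled, the remaining verifications (endpoint preservation, weak-to-strong passage, and the time variation) are routine.
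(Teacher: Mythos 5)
Your proposal is correct and follows essentially the paper's route: a direct-method compactness argument for (a) using the same two action bounds $\A_h \ge hT$ and $\A_h \ge \|y-x\|^2/(2T)$ to confine the free time to a compact interval, Marchal's lemma for the regularity part of (b), and concatenation for (c). You spell out two details the paper elides — you minimize over the pair $(\gm,T)$ in one step rather than first producing fixed-time minimizers $\gm_\tau$ and then optimizing over $\tau$ (which quietly needs lower semicontinuity of $\tau \mapsto \phi_h(x,y;\tau)$), and you supply the explicit time-rescaling variation $\gm_\ep(t)=\gm^*((1+\ep)t)$ to derive $E\equiv h$, where the paper instead cites Chenciner \cite[3.3]{CI00}; both are fine and standard.
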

\begin{proof}
(a). By a standard argument in the direct method of calculus of variation, there exists a $\gm_\tau \in \ac(x, y; \tau)$ for some $\tau>0$, such that $\A(\gm_{\tau}; 0, \tau) = \phi_h(x, y; \tau)$. The Cauchy-Schwartz inequality then implies
$$ \ell(\gm_{\tau}|_{[0, \tau]}) = \int_{0}^\tau \|\dot{\gm}_\tau\| \,dt \le \sqrt{\tau} \left( \int_0^\tau \|\dot{\gm}_\tau\|^2 \,dt \right)^{\ey}. $$
As $x \ne y$, if $\tau \to 0$, 
$$ \phi_h(x, y; \tau) \ge \ey \int_0^\tau \|\dot{\gm}_\tau\|^2 \,dt \ge \frac{1}{2\tau} \left( \ell(\gm_{\tau}|_{[0, \tau]}) \right)^2 \ge \frac{\|x-y\|^2}{2\tau} \to \infty.
$$
Meanwhile $\phi_h(x, y; \tau) \ge h \tau \to \infty$, when $\tau \to \infty$.

As a result, there must exist a $T>0$ and $\gm \in \ac(x, y; T)$ satisfying
$$ \A_h(\gm; 0, T) = \phi_h(x,y;T) = \inf_{\tau >0} \phi_h(x, y; \tau) = \phi_h(x, y). $$

(b). By Proposition \ref{prop;Marchal}, $\gm|_{(0, T)}$ is a collision-free solution. Since $\gm$ is an $h$-free-time minimizer, it is well-known its energy must be $h$. For a detailed proof see \cite[3.3]{CI00}.

(c). The result is trivial, when $z =x$, $\phi_h(x, x)=0$. The same argument holds for $z =y$. 

When $z \ne x$ and $z \ne y$, there exist $\gm_x \in \ac(x, z)$ and $\gm_y \in \ac(z, y)$ satisfying 
$$ \int L(\gm_x, \dot{\gm}_x) + h\,dt = \phi_h(x, z), \;\; \int L(\gm_y, \dot{\gm}_y) + h \,dt = \phi_h(z, y).
$$
As the concatenation of $\gm_x$ and $\gm_y$ belongs to $\ac(x, y)$, we have 
$$ \phi_h(x, y) \le \phi_h(x, z) + \phi_h(z, y). $$

\end{proof}

\begin{lem}
\label{lem;phi-h-up-bound} There are two positive constants $\beta_1$ and $\beta_2$, such that for any $x, y \in \E$ and $h>0$, 
$$ \phi_h(x, y) \le \left( \beta_1 \|x-y\|^2 + \beta_2 \|x-y\|^{2-\al} \right)^{\ey}. $$
\end{lem}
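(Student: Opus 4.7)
The plan is to produce an explicit test path from $x$ to $y$ and invoke $\phi_h(x,y) \le \A_h(\gm;0,T)$ for any $\gm \in \ac(x,y;T)$. As a first attempt I would take the straight-line segment $\gm(t) = x + (t/T)(y-x)$ for $t \in [0,T]$; a change of variable $s=t/T$ gives
\[
\A_h(\gm;0,T) = \frac{\|y-x\|^2}{2T} + hT + T\,\bar U, \qquad \bar U := \int_0^1 U\bigl((1-s)x+sy\bigr)\,ds.
\]
Minimizing over $T$ via Young's inequality (equivalently, a Jacobi-length/Cauchy--Schwarz argument applied to the straight line) yields $\phi_h(x,y)^2 \le 2\|y-x\|^2(h+\bar U)$. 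The $2h\|y-x\|^2$ term already supplies the $\beta_1\|y-x\|^2$ piece, so the task reduces to establishing an estimate of the form $\bar U \le C\|y-x\|^{-\al}$ with $C$ depending only on the masses and $\al$.

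Each pair contribution is $m_im_j \int_0^1 |(1-s)(x_i-x_j)+s(y_i-y_j)|^{-\al}\,ds$; the desired bound follows from the $\al$-homogeneity in the generic case where the segment is bounded away from $\Delta$. The trouble is that the segment may pass close to or through $\Delta$; indeed, for $\al \ge 1$ the integral can diverge and the straight line fails outright. I would handle this by perturbing the test path with a transverse sinusoidal bump, replacing $\gm$ by $\gm(t) + \ep \sin(\pi t/T)\,w$ for a suitably chosen $w \in \E$ and amplitude $\ep > 0$. Since $\Delta$ has codimension at least $d \ge 2$, one can pick $w$ so that, for every pair $(i,j)$, $w_i-w_j$ has a component transverse to the segment from $x_i-x_j$ to $y_i-y_j$ of size bounded below by a uniform constant. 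The minimum pair distance along the perturbed path is then at least a constant multiple of $\ep$, giving $\int_0^T U(\gm)\,dt \le C T \ep^{-\al}$ at the additional kinetic cost $\pi^2 \ep^2 \|w\|^2/(4T)$ (the cross term between the straight-line and perturbation velocities integrates to zero).

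A two-step optimization finishes the proof. First choose $\ep$ to balance $\ep^2/T$ against $T\ep^{-\al}$; both contributions then become comparable to $T^{(2-\al)/(\al+2)}$. Next choose $T$ to balance $\|y-x\|^2/T$ against the remaining terms, obtaining a mixed upper bound of the shape $\phi_h(x,y) \le C_1\|y-x\| + C_2\|y-x\|^{1-\al/2}$, with the first term dominant for large $\|y-x\|$ (matching $\sqrt{2h}\|y-x\|$) and the second for small $\|y-x\|$ (matching the free-fall scaling associated to $\al$-homogeneity). Squaring and using $(a+b)^2 \le 2a^2+2b^2$ absorbs the cross term and yields the claimed bound with positive constants $\beta_1,\beta_2$. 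The main obstacle is to make the choice of $w$ (and the resulting transversality constant) uniform in $x$ and $y$; this I would handle by a compactness argument in the finitely many pair directions, exploiting $d \ge 2$. As a fallback one may also apply Lemma~\ref{lem;h-free-time}(c) to reroute through an intermediate configuration off $\Delta$. The remainder of the argument is routine optimization using the $\al$-homogeneity.
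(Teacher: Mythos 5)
Your approach is genuinely different from the paper's. The paper obtains the key fixed-time estimate
\[
\phi_h(x,y;T)\le \eta_1\frac{r^2}{T}+\eta_2\frac{T}{r^\al}+hT \quad (r>\|x-y\|)
\]
by citing Maderna's Theorem 1 in~\cite{Md2012}, and then merely optimizes over $T$, which is a two-line computation. You instead try to reprove such an estimate from scratch with a perturbed straight line. The back-end of your argument (the two-step optimization in $\ep$ and then in $T$) is sound: balancing $\ep^2/T$ against $T\ep^{-\al}$ does yield a contribution $\sim T^{(2-\al)/(\al+2)}$, and balancing $\|y-x\|^2/T$ against this (for small $\|y-x\|$) gives $\|y-x\|^{1-\al/2}$, while balancing against $hT$ (for large $\|y-x\|$) gives $\|y-x\|$; squaring recovers the claimed form with $h$-dependent constants, which is exactly what the paper's proof also produces.

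The genuine gap is in the construction of the test path. The lemma must hold for \emph{all} $x,y\in\E$, including $x$ or $y$ in or arbitrarily close to the collision set $\Delta$, and your perturbation $\ep\sin(\pi t/T)w$ vanishes at $t=0$ and $t=T$. Consequently if $x_i=x_j$ for some pair, the perturbed pair separation still behaves like $|\gm_i(t)-\gm_j(t)|\sim t/T$ near $t=0$, so $\int_0^T U(\gm)\,dt$ diverges for $\al\ge 1$ and, more to the point, blows up as $\min_{ij}|x_i-x_j|\to 0$ for $\al>1$ even when $x\notin\Delta$; the required bound is uniform in $x,y$, so this is fatal. The fallback you suggest (reroute via an intermediate $z\notin\Delta$ using the triangle inequality of Lemma~\ref{lem;h-free-time}(c)) is circular: estimating $\phi_h(x,z)$ with $x\in\Delta$ is the same problem. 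Handling endpoints near collision is precisely the nontrivial content of~\cite{Md2012}, which uses a more elaborate construction than a transverse bump; your transversality/compactness argument for the choice of $w$ is fine, but it only protects the interior of the segment, not the endpoints. So as written, the proposal does not establish the lemma for $\al\ge 1$.
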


\begin{proof}
When $x = y$, $\phi_h(x, y)=0$ and the result holds. Let's assume $x \ne y$ from now on. 

For any $r >\|x-y\|>0$, by \cite[Theorem 1]{Md2012}, there two positive constants $\eta_1$ and $\eta_2$ independent of $x$ and $y$, such that for any $T>0$, 
$$ \phi_h(x, y; T) \le \eta_1 \frac{r^2}{T} + \eta_2 \frac{T}{r^{\al}} + h T. $$
The right hand side of the above inequality as a function of $T$ has a global minimum in $(0, \infty)$ at $T = (\eta_1 r^2/(h + \eta_2/r^{\al}))^\ey$, so 
$$ \phi_h(x, y) \le \left( 4 h \eta_1 r^2 + 4 \eta_1 \eta_2 r^{2-\al} \right)^{\ey}. $$
Letting $r$ go to $\|x-y\|$, the desired result follows from continuity with $\beta_1 = 2 h \eta_1$ and $\beta_2 = 4 \eta_1 \eta_2$. 
\end{proof}

This lemma further implies continuity of $\phi_h$. 

\begin{lem}
\label{lem;phi-h-cont} $\phi_h(x, y)$ is continuous with respect to both $x$ and $y$. 
\end{lem}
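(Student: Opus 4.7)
The plan is to deduce continuity from two ingredients already established: the triangle inequality for $\phi_h$ given by Lemma \ref{lem;h-free-time}(c), and the quantitative upper bound from Lemma \ref{lem;phi-h-up-bound}. The only extra observation I need is that $\phi_h$ is symmetric, i.e.\ $\phi_h(x,y) = \phi_h(y,x)$. This follows because the Lagrangian $L(q,\dot q) = \tfrac12\|\dot q\|^2 + U(q)$ is invariant under the time-reversal $t \mapsto T-t$: if $\gm \in \ac(x,y;T)$ realizes $\phi_h(x,y)$, then $t \mapsto \gm(T-t)$ lies in $\ac(y,x;T)$ with the same $h$-modified action value, so $\phi_h(y,x) \le \phi_h(x,y)$, and the reverse inequality follows by symmetry.

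With this in hand, fix $x, y \in \E$ and take any $x', y' \in \E$. Applying Lemma \ref{lem;h-free-time}(c) twice yields
\begin{equation*}
\phi_h(x',y') \le \phi_h(x',x) + \phi_h(x,y) + \phi_h(y,y'),
\end{equation*}
and by switching the roles of the primed and unprimed points together with the symmetry noted above,
\begin{equation*}
\phi_h(x,y) \le \phi_h(x,x') + \phi_h(x',y') + \phi_h(y',y).
\end{equation*}
Combining these,
\begin{equation*}
\bigl| \phi_h(x',y') - \phi_h(x,y) \bigr| \le \phi_h(x,x') + \phi_h(y,y').
\end{equation*}

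Finally, Lemma \ref{lem;phi-h-up-bound} gives
\begin{equation*}
\phi_h(x,x') \le \bigl( \beta_1 \|x-x'\|^2 + \beta_2 \|x-x'\|^{2-\al} \bigr)^{\ey},
\end{equation*}
and similarly for $\phi_h(y,y')$. Since $\al \in (0,2)$, both exponents $2$ and $2-\al$ are positive, so the right-hand side tends to $0$ as $(x',y') \to (x,y)$. This yields joint continuity of $\phi_h$, which is stronger than the separate continuity asserted in the statement. I do not expect any real obstacle here; the content is entirely packaged in the previous two lemmas plus the trivial time-reversal symmetry.
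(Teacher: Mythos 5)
Your proof is correct and takes essentially the same route as the paper: triangle inequality (Lemma \ref{lem;h-free-time}(c)) plus the quantitative upper bound (Lemma \ref{lem;phi-h-up-bound}). The only difference is that you make the time-reversal symmetry explicit and pass through it to obtain joint continuity, whereas the paper argues separate continuity in each variable and does not need symmetry because the bound of Lemma \ref{lem;phi-h-up-bound} is already symmetric in its arguments; both variations are fine and the underlying idea is identical.
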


\begin{proof}
Choose an arbitrary $y'$ close to $y$, by Lemma \ref{lem;phi-h-up-bound}, there is a $C>0$, such that 
$$ \phi_h(x, y) - \phi_h(x, y') \le \phi_h(y', y) \le C \|y-y'\|^{\frac{2-\al}{2}}; $$
$$ \phi_h(x, y') - \phi_h(x, y) \le \phi_h(y, y') \le C\|y-y'\|^{\frac{2-\al}{2}}. $$
This shows $\phi_h(x, y)$ is continuous with respect to $y$. The proof for $x$ is similar. 
\end{proof}

\begin{lem}
\label{lem;Lag-Jacobi} If $\gm(t)$ is collision-free solution of \eqref{eq;Nbody} with energy $h>0$, then $ d^2\|\gm(t)\|/dt^2 >0$. 
\end{lem}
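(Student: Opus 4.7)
My plan is to reduce the statement about $\|\gamma(t)\|$ to the usual Lagrange-Jacobi identity for the moment of inertia $I(t) = \|\gamma(t)\|^2 = \sum_i m_i |q_i(t)|^2$, and then eliminate the first-derivative cross term by Cauchy-Schwarz.

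First, I would compute $\ddot{I}$. Differentiating twice gives $\dot I = 2\ll\gamma,\dot\gamma\gg$ and
$$\ddot I = 2\|\dot\gamma\|^2 + 2\ll\gamma,\ddot\gamma\gg.$$
Using the equation of motion \eqref{eq;Nbody} and the fact that $U$ is $(-\alpha)$-homogeneous, Euler's identity gives $\ll\gamma,\ddot\gamma\gg = -\sum_i \langle q_i,\partial U/\partial q_i\rangle = \alpha U$. Substituting the energy relation $\|\dot\gamma\|^2 = 2h+2U$ from \eqref{eq; energy} then yields
$$\ddot I = 4h + 2(2+\alpha) U.$$

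Next, writing $r(t)=\|\gamma(t)\|=\sqrt{I(t)}$ (which is strictly positive along a collision-free trajectory, since $I=0$ would force total collision), a direct computation gives
$$\ddot r = \frac{2I\ddot I - \dot I^2}{4\,I^{3/2}},$$
so it suffices to prove $2I\ddot I > \dot I^2$. By Cauchy-Schwarz applied to the mass-weighted inner product,
$$\dot I^2 = 4\ll\gamma,\dot\gamma\gg^2 \le 4\|\gamma\|^2\|\dot\gamma\|^2 = 8I(h+U).$$
Combining this with the formula for $\ddot I$,
$$2I\ddot I - \dot I^2 \ge 2I\bigl(4h + 2(2+\alpha)U\bigr) - 8I(h+U) = 4\alpha\, I\, U,$$
which is strictly positive because $\alpha>0$, $U(\gamma(t))>0$ off the collision set, and $I(t)>0$.

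There isn't really a hard step here; the only thing to get right is the algebra that the positive-energy contribution $4h$ in $\ddot I$ combined with the homogeneity contribution $2(2+\alpha)U$ is exactly enough to dominate the Cauchy-Schwarz bound $8I(h+U)$, leaving the clean positive remainder $4\alpha IU$. The assumption $\alpha\in(0,2)$ is used only through $\alpha>0$; the positivity of $h$ is not in fact needed for the final conclusion, since any $h\ge 0$ would suffice, but is consistent with the hypothesis.
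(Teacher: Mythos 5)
Your argument has a sign error in the application of Euler's identity, and the error is load\mbox{-}bearing. With the paper's conventions, $U>0$ is homogeneous of degree $-\alpha$ and the equations of motion \eqref{eq;Nbody} read $m_i\ddot q_i=\partial U/\partial q_i$ (the force\mbox{-}function convention), not $m_i\ddot q_i=-\partial U/\partial q_i$. Hence
\[
\ll\gamma,\ddot\gamma\gg=\sum_i\langle q_i,\partial U/\partial q_i\rangle=-\alpha\,U,
\]
so the Lagrange--Jacobi identity is $\ddot I=2\|\dot\gamma\|^2-2\alpha U=4h+2(2-\alpha)U$, \emph{not} $4h+2(2+\alpha)U$. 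Feeding the corrected identity into your Cauchy--Schwarz step gives
\[
2I\ddot I-\dot I^2\;\ge\;2I\bigl(4h+2(2-\alpha)U\bigr)-8I(h+U)=-4\alpha IU\le 0,
\]
which proves nothing: the $+4h$ term is exactly cancelled, and the homogeneity contribution now has the wrong sign. This is not repairable, because the literal conclusion $d^2\|\gamma\|/dt^2>0$ is in fact false along collinear data: for a homothetic expanding solution $\gamma(t)=\lambda(t)q_0$ with $q_0$ a normalized central configuration and $h>0$, one has $\ddot\lambda=-\alpha U(q_0)\lambda^{-\alpha-1}<0$, so $d^2\|\gamma\|/dt^2<0$ there; this is precisely the equality case of the Cauchy--Schwarz bound you used.

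You were right to notice that $\ddot I>0$ does not, by itself, give $\ddot{\sqrt I}>0$; the paper's phrase ``it is enough to prove $\ddot I(t)>0$'' glosses over exactly this point. But the fix is to weaken the stated conclusion rather than strengthen the proof: what the paper's computation actually establishes is $\tfrac12\ddot I=\tfrac{2-\alpha}{2}\|\dot\gamma\|^2+\alpha h>0$, i.e.\ strict convexity of $I=\|\gamma\|^2$, and that is all that is invoked downstream in Lemma~\ref{lem;dist-estimate} --- once $\dot I(T')>0$, convexity of $I$ forces $\dot I>0$ and hence $\|\gamma\|$ strictly increasing on $[T',T]$. So the paper's derivation of $\ddot I>0$ is correct and sufficient; your proposal, after the sign is fixed, neither recovers that identity (you get $4h+2(2-\alpha)U$, matching the paper) nor succeeds in the stronger claim, which does not hold.
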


\begin{proof}
Since $\gm(t)$ is collision-free, $\|\gm(t)\|$ is always positive. Let $I(t) = \|\gm(t)\|^2$. It is enough to prove $\ddot{I}(t) >0$, which follows from 
$$ 
\begin{aligned}
\ey \ddot{I} & = \ll \dot{\gm}, \dot{\gm} \gg^2 + \ll \gm, \ddot{\gm} \gg = \|\dot{\gm}\|^2 -\al U(\gm) \\
& =\frac{2-\al}{2} \|\dot{\gm}\|^2 + \al \left( \ey \|\dot{\gm}\|^2 -U(\gm) \right) = \frac{2-\al}{2} \|\dot{\gm}\|^2 + \al h>0.  
\end{aligned}
$$
\end{proof}

For any $v \in \hat{\bs}$, define $\vv:=\min \{ |v_{ij}| =|v_i -v_j|: \; 1 \le i \ne j \le N \}$ and 
$$ \bb_{\dl \vv}(v) :=\{ u \in \bs: \|u-v\| \le \dl \vv\}, $$
where $\dl :=  \frac{\sqrt{m_0}}{4}$ and $m_0 := \min\{m_i: \; i =1, \dots, N\}. $  
\begin{lem}
\label{lem;v-u}  If $u \in \bb_{\dl \vv}(v)$, then $|u_{ij}| \ge \ey |v_{ij}|$, $\forall 1 \le i \ne j \le N,$ and $U(u) \le 2^{\al} U(v).$
\end{lem}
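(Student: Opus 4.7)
The plan is to bound $|u_{ij}-v_{ij}|$ componentwise using that the mass-weighted norm controls each coordinate difference, and then feed the first inequality into the definition of $U$.

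First I would unpack the mass-weighted norm: since $\|u-v\|^2 = \sum_{k=1}^N m_k |u_k - v_k|^2$, every summand is bounded by the whole, so for each index $k$,
$$ |u_k - v_k| \le \frac{\|u-v\|}{\sqrt{m_k}} \le \frac{\|u-v\|}{\sqrt{m_0}} \le \frac{\dl \vv}{\sqrt{m_0}}. $$
Applying the triangle inequality to $u_{ij}-v_{ij}=(u_i-v_i)-(u_j-v_j)$ then gives
$$ |u_{ij} - v_{ij}| \le |u_i-v_i| + |u_j-v_j| \le \frac{2\dl \vv}{\sqrt{m_0}} = \frac{\vv}{2}, $$
using the explicit choice $\dl = \sqrt{m_0}/4$. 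Since $|v_{ij}| \ge \vv$ by definition of $\vv$, this bound is at most $|v_{ij}|/2$, and the reverse triangle inequality yields
$$ |u_{ij}| \ge |v_{ij}| - |u_{ij} - v_{ij}| \ge \tfrac{1}{2}|v_{ij}|, $$
which is the first claim.

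For the second claim, I would plug the first inequality directly into the definition of $U$:
$$ U(u) = \sum_{1\le i<j \le N} \frac{m_im_j}{|u_{ij}|^\al} \le \sum_{1\le i<j \le N} \frac{m_im_j}{(|v_{ij}|/2)^\al} = 2^\al U(v). $$

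There is no real obstacle here; the whole lemma is a bookkeeping exercise, and the only thing to verify is that the constant $\dl = \sqrt{m_0}/4$ is precisely calibrated to make the factor $2\dl/\sqrt{m_0}$ equal $1/2$, so that one gets the clean pointwise bound $|u_{ij}|\ge |v_{ij}|/2$ used in the remainder of the argument.
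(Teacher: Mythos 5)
Your proof is correct and follows essentially the same route as the paper: both control $|u_i-v_i|+|u_j-v_j|$ by $2\|u-v\|/\sqrt{m_0}$ (you via the per-component bound $|u_k-v_k|\le\|u-v\|/\sqrt{m_k}$, the paper via the intermediate Euclidean norm of the full difference vector), then invoke the choice $\dl=\sqrt{m_0}/4$ together with $|v_{ij}|\ge\vv$ and the reverse triangle inequality. The second claim is obtained identically by substituting the pointwise bound into the sum defining $U$.
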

\begin{proof}
Notice that $m_0 |u-v|^2 \le \|u-v\|^2$. Then
$$ \begin{aligned}
|u_{ij}| & = |u_i -v_i +v_i -v_j +v_j -u_j| \ge |v_{ij}| - |u_i -v_i| - |u_j -v_j|\\
& \ge |v_{ij}| - 2|u-v| \ge |v_{ij}| - \frac{2}{\sqrt{m_0}} \|u-v\| \ge |v_{ij}| - \ey \vv \\
& \ge |v_{ij}| - \ey |v_{ij}| = \ey |v_{ij}|. 
\end{aligned}
$$
By the above inequality, 
$$ U(u) = \sum_{1 \le i < j\le N} \frac{m_i m_j}{|u_{ij}|^{\al}} \le \sum_{1 \le i < j \le N} 2^{\al} \frac{m_i m_j}{|v_{ij}|^{\al}} \le 2^{\al} U(v). $$
\end{proof}

\begin{lem} \label{lem;Action-xi}
If $u \in \bb_{\dl \vv}(v)$ and $r_2>r_1>0$, $\frac{1}{\sqrt{2h}}\phi_h(r_1u, r_2u) \le r_2-r_1 + W_{\al, v}(r_1, r_2)$ with
\begin{equation}
\label{eq;W-al-v}  W_{\al, v}(r_1, r_2) = \begin{cases}
\frac{2^{\al}U(v)}{2h(\al-1)  }\left(\frac{1}{r_1^{\al-1}}-\frac{1}{r_2^{\al-1}} \right), & \text{ if } \al >1; \\
\frac{2^{\al}U(v)}{ 2h}(\log r_2- \log r_1), & \text{ if } \al =1; \\
\frac{2^{\al}U(v)}{2h(1-\al) }(r_2^{1-\al}-r_1^{1-\al}), & \text{ if } \al \in (0, 1). 
\end{cases}
\end{equation}
\end{lem}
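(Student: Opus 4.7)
The plan is to bound $\phi_h(r_1 u, r_2 u)$ by constructing an explicit test path from $r_1 u$ to $r_2 u$ and computing its $h$-modified action directly. Since $\phi_h$ is defined as an infimum over both paths and time, any concrete candidate yields an upper bound.

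The natural choice is the radial path $\gm(t) = r(t) u$ with $r$ running from $r_1$ to $r_2$. I would pick the constant-speed parametrization $r(t) = r_1 + \sqrt{2h}\, t$ on $[0, T]$ with $T = (r_2 - r_1)/\sqrt{2h}$; the reason is that this choice makes the kinetic-plus-energy part $\tfrac{1}{2}\|\gmd\|^2 + h$ equal to $2h$ identically (so that the equality case $\tfrac12\dot r^2 + h = \sqrt{2h}\,\dot r$ is achieved), giving a contribution of $\sqrt{2h}(r_2-r_1)$ after integrating over $[0,T]$. Since $\|u\|=1$ we have $\|\gmd\| = \dot r$, and by the $\al$-homogeneity of $U$ we get $U(\gm(t)) = U(u)/r(t)^{\al}$.

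It then remains to integrate the potential term. Substituting $s = r_1 + \sqrt{2h}\,t$,
\begin{equation*}
\int_0^T U(\gm)\,dt = \frac{U(u)}{\sqrt{2h}} \int_{r_1}^{r_2} s^{-\al}\,ds,
\end{equation*}
and the elementary integral $\int_{r_1}^{r_2} s^{-\al}\,ds$ splits into the three cases $\al>1$, $\al=1$, $\al\in(0,1)$ exactly matching the three cases of $W_{\al,v}(r_1,r_2)$. Applying Lemma \ref{lem;v-u} to bound $U(u) \le 2^{\al} U(v)$, summing the two contributions, and finally dividing by $\sqrt{2h}$ produces the stated inequality. There is no essential obstacle here; the only point requiring any thought is choosing the constant-speed reparametrization so that the kinetic-plus-energy term contributes exactly $\sqrt{2h}(r_2-r_1)$ rather than something larger, and verifying that $u \in \bb_{\dl \vv}(v)$ is used only through the potential estimate of Lemma \ref{lem;v-u}.
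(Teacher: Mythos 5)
Your proposal is correct and matches the paper's proof essentially step for step: the paper also uses the constant-speed radial test path $\xi(t) = r_1 u + \sqrt{2h}\,t\,u$ on $[0, (r_2-r_1)/\sqrt{2h}]$, observes that $\rho(t)=\|\xi(t)\| = r_1 + \sqrt{2h}t$, and bounds the potential integral via $U(u) \le 2^{\al}U(v)$ from Lemma \ref{lem;v-u}. Your explanation of why this particular parametrization makes the kinetic-plus-energy term exactly $\sqrt{2h}(r_2-r_1)$ is a nice articulation of what the paper leaves implicit.
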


\begin{proof}
Let $\xi(t) = r_1u + \sqrt{2h}tu$, $t \in [0, \frac{r_2-r_1}{\sqrt{2h}}]$. Then $\rho(t) = \|\xi(t)\| = r_1 + \sqrt{2h}t$ and 
$$ \frac{1}{\sqrt{2h}}\A_h(\xi; 0, \frac{r_2-r_1}{\sqrt{2h}}) \le r_2-r_1 +  \frac{2^{\al}U(v)}{2h} \int_{r_1}^{r_2} \rho^{-\al} \,d\rho = r_2-r_1 + W_{\al, v}(r_1, r_2).
$$ 
\end{proof}

\begin{lem} \label{lem;dist-estimate}
Given an arbitrary $x$ and $v \in \hat{\bs}$. There is an $R> 2\|x\|$ large enough, such that for any $y$ with  $\|y\| \ge R$ and $u = y /\|y\| \in \bb_{\dv}(v)$, if $\gm \in \ac(x, y; T)$ is an $h$-free-time minimizer, there is a constant $C_{\al, v}$ independent of $\gm$, such that
$$ \lnm \frac{\gm(t)}{\|\gm(t)\|} - u \rnm \le  C_{\al, v}f_{\al}(\|\gm(t)\|), \; \forall t \in [T', T],  
$$
where $T'= \inf \{ t \in [0, T]; \; \|\gm(t) \| = \ey \|y\| \}$ and $\fal$ is decreasing function as below
\begin{equation} \label{eq;f-al} 
\fal(r) = 
\begin{cases}
  (2 r)^{-\ey}, \; & \text{ if } \al > 1; \\
  \left(\frac{\log 2r}{2r} \right)^{\ey}, \; & \text{ if } \al=1; \\
  (2r)^{-\frac{\al}{2}} , \; & \text{ if } \al \in (0, 1). \\
\end{cases}
\end{equation}
\end{lem}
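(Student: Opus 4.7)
The strategy is to extract the angular deviation $\|\gm(t)/\|\gm(t)\| - u\|$ from the gap between an upper bound for $\phi_h(x,y)$ obtained from a trial path along the asymptotic ray $\{ru:r>0\}$, and a lower bound obtained by applying Lemma \ref{lem;length-phi} to the two sub-arcs of $\gm$ on either side of $\gm(t)$. The crucial preliminary is Lemma \ref{lem;Lag-Jacobi}: since $I(t):=\|\gm(t)\|^2$ is strictly convex on $(0,T)$ with $I(0)=\|x\|^2 < (\|y\|/2)^2 = I(T') < \|y\|^2 = I(T)$ (using $R>2\|x\|$), convexity forces $I$, hence $\|\gm(t)\|$, to be strictly increasing on $[T',T]$. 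Consequently $\|\gm(t)\|\in[\|y\|/2, \|y\|]$, so in particular $\|y\|\le 2\|\gm(t)\|$; this is exactly what will convert $\|y\|$-dependent estimates into $\|\gm(t)\|$-dependent ones at the end.

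For the upper bound, fix some $r_0$ depending only on $x$ (e.g.\ $r_0=\max\{2\|x\|,1\}$), and combine Lemmas \ref{lem;h-free-time}(c), \ref{lem;phi-h-up-bound}, \ref{lem;Action-xi} to get
$$ \phi_h(x,y) \le \phi_h(x, r_0 u) + \phi_h(r_0 u, y) \le C_0 + \sqrt{2h}\bigl((\|y\|-r_0) + W_{\al,v}(r_0,\|y\|)\bigr), $$
with $C_0=C_0(x,v)$ a constant. For the lower bound, recall that $\gm|_{[0,t]}$ and $\gm|_{[t,T]}$ are themselves $h$-free-time minimizers, so applying Lemma \ref{lem;length-phi} to each sub-arc gives
$$ \phi_h(x,y) = \phi_h(x,\gm(t)) + \phi_h(\gm(t),y) \ge \sqrt{2h}\bigl(\|\gm(t)\| - \|x\| + \|\gm(t)-y\|\bigr), $$
after using $\|x-\gm(t)\|\ge \|\gm(t)\|-\|x\|$ (valid since $\|\gm(t)\|\ge\|y\|/2>\|x\|$). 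Combining the two bounds produces
$$ \|\gm(t)-y\| \le (\|y\|-\|\gm(t)\|) + M, \qquad M := \tfrac{C_0}{\sqrt{2h}} + \|x\| - r_0 + W_{\al,v}(r_0,\|y\|). $$

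To turn this radial-plus-error bound into an angular bound, expand with the mass-weighted polar identity
$$ \|\gm(t)-y\|^2 = (\|y\|-\|\gm(t)\|)^2 + \|\gm(t)\|\|y\|\,\|\omega(t)-u\|^2, \qquad \omega(t):=\gm(t)/\|\gm(t)\|, $$
which is immediate from $\|\omega(t)\|=\|u\|=1$. Note $M\ge 0$ automatically, since $M<0$ would contradict the always-valid $\|\gm(t)-y\|\ge\|y\|-\|\gm(t)\|$. Squaring the displayed upper bound for $\|\gm(t)-y\|$ and subtracting $(\|y\|-\|\gm(t)\|)^2$ yields
$$ \|\gm(t)\|\|y\|\,\|\omega(t)-u\|^2 \le 2(\|y\|-\|\gm(t)\|)M + M^2 \le 2\|y\| M + M^2. $$
Dividing by $\|\gm(t)\|\|y\|$ and invoking $\|y\|\le 2\|\gm(t)\|$ to replace $W_{\al,v}(r_0,\|y\|)$ (which is monotone in its second argument) by $W_{\al,v}(r_0, 2\|\gm(t)\|)$ eliminates the remaining $\|y\|$-dependence. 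Reading off the three cases in \eqref{eq;W-al-v} then yields exactly the three regimes in \eqref{eq;f-al}, with $C_{\al,v}$ absorbing all fixed constants depending on $x,r_0,\al,v,h$. The main technical step is precisely this conversion from $\|y\|$ to $\|\gm(t)\|$, which is where Lemma \ref{lem;Lag-Jacobi} becomes indispensable; everything else is routine bookkeeping across the three regimes of $\al$.
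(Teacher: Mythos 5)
Your proof is correct, and it takes a genuinely different route from the paper's, so the comparison is worth recording. The paper extracts the angular deviation geometrically: it fixes $z=\|\gm(t)\|u$, locates (by an intermediate-value argument, with a case split for $\gm(t)=z$) a crossing time $t'\in(t,T)$ where $\gm(t')$ lies on the hyperplane $\Sigma$ through $z$ orthogonal to $u$, applies the Pythagorean relations $\|\gm(t')-z\|^2=\|\gm(t')\|^2-\|z\|^2=\|y-\gm(t')\|^2-\|y-z\|^2$ plus the length budget to bound $\|\gm(t')-z\|$, and separately bounds $\|\gm(t)-\gm(t')\|$ by the residual length budget. You dispense with the auxiliary point $t'$ entirely: the subadditivity $\phi_h(x,y)=\phi_h(x,\gm(t))+\phi_h(\gm(t),y)$ along the minimizer together with Lemma~\ref{lem;length-phi} gives a ``near-degenerate triangle'' inequality $\|\gm(t)-y\|\le(\|y\|-\|\gm(t)\|)+M$, and the exact polar identity $\|a-b\|^2=(\|a\|-\|b\|)^2+\|a\|\|b\|\,\|\hat a-\hat b\|^2$ (valid for the mass-weighted norm) converts that defect $M$ algebraically into the desired angular bound in one step. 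This avoids the crossing-time construction and the two-stage Pythagorean/length-budget bookkeeping; after using $\|y\|\le 2\|\gm(t)\|$ on $[T',T]$ (which both you and the paper obtain from Lemma~\ref{lem;Lag-Jacobi}), the three regimes of $W_{\al,v}$ give exactly the three cases of $\fal$. One very small remark: you invoke that $\gm|_{[0,t]}$ and $\gm|_{[t,T]}$ are themselves $h$-free-time minimizers, which is true but not stated in the preliminaries; for the lower bound you actually only need $\A_h(\gm;0,t)\ge\sqrt{2h}\,\ell(\gm|_{[0,t]})\ge\sqrt{2h}\,\|\gm(t)-x\|$ (and similarly on $[t,T]$), which follows directly from Lemma~\ref{lem;length-phi} applied to the restricted arcs without the intermediate minimality claim.
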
 
\begin{proof}
Since $\ey\|y\| > \|x\|$, $T'$ is well-defined with $\frac{d \|\gm(t)\|}{dt}|_{t = T'}>0$. By Lemma \ref{lem;Lag-Jacobi}, $\|\gm(t)\|$ is strictly increasing, when $t \in [T', T]$. 

By Lemma \ref{lem;phi-h-up-bound}, there is a constant $C_1$ depending on $x$ and $v$, such that 
$$ \sup\{ \phi_h(x, \|x\|u); \; u \in \bb_{\dl \vv}(v) \} \le \sqrt{2h} C_1.$$
Then Lemma \ref{lem;h-free-time} and \ref{lem;Action-xi} imply
\begin{equation} \label{eq;phi-UppBnd}
\phi_h(x, y) \le \phi_h(x, \|x\|u) + \phi_h(\|x\|u, y) \le \sqrt{2h} \big(C_1 + \|y\| -\|x\| + W_{\al, v}(\|x\|, \|y\|)\big).
\end{equation}
With this, Lemma \ref{lem;length-phi} implies 
\begin{equation}
\label{eq;ell-gm<phi-h}  
\ell(\gm|_{[0, T]}) \le \frac{1}{\sqrt{2h}} \phi_h(x, y) \le \|y\| + W_{\al, v}(\|x\|, \|y\|)+ C_1  -\|x\|. 
\end{equation}

Fix an arbitrary $t \in [T', T)$ (the result is trivial, when $t = T$), set $r= \|\gm(t)\|$ and $z = ru$, then $\Sigma = \{ q \in \E: \; \ll q-z, u \gg =0\}$ is the hyperplane perpendicular to the line $\overrightarrow{0z}$ at $z$. If $\gm(t) =z$, set $t' =t$. If not, $\gm(t)$ and $y$ must be separated by $\Sigma$, as $\|\gm(t)\| < \|y\|$. Then we can always find a $t' \in (t, T)$ with $\gm(t') \in \Sigma$(see Figure \ref{fig:Triangle}).  As a result, 
\begin{equation}
\begin{cases}
\|\gm(t')- z \|^2 = \|\gm(t') \|^2 - \|z \|^2, & \|\gm(t')\| \ge \|z\| \\
\|\gm(t')- z \|^2 = \|y -\gm(t')\|^2 - \|y -z\|^2, &  \|y -\gm(t')\| \ge \|y-z\|
\end{cases}
\end{equation}

\begin{figure} 
\centering
\includegraphics[scale=0.8]{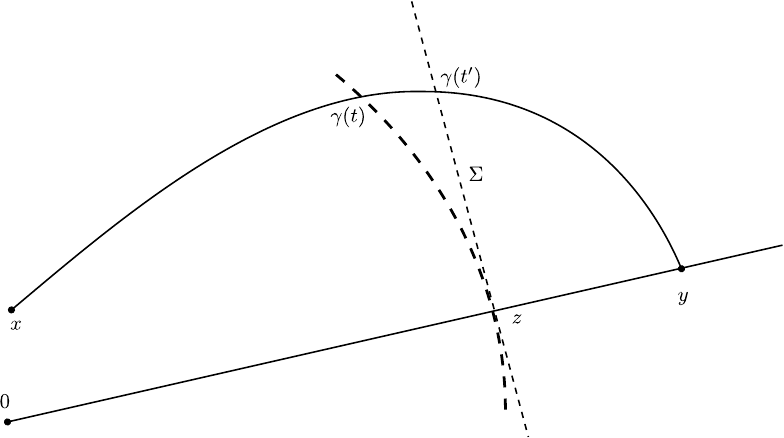}
\caption{}
\label{fig:Triangle}
\end{figure}

Combining these with \eqref{eq;ell-gm<phi-h}, we get
\begin{equation}
\label{eq;gmt'-z-square} \begin{aligned}
& 2\|\gm(t') -z\|^2  = \|\gm(t')\|^2 + \|\gm(t') -y\|^2 - (\|z\|^2 + \|y -z\|^2) \\
& = (\|\gm(t')\| + \|\gm(t') - y\|)^2 - 2 \|\gm(t')\| \cdot \|\gm(t') - y \| -   (\|y-z\| + \|z\|)^2 + 2\|z\| \cdot \|y-z\|  \\
& \le (\|\gm(t')\| + \|\gm(t') - y\|)^2 - \|y\|^2 \\
& \le (\|x\| + \|x -\gm(t')\| + \|\gm(t') -y\|)^2 - \|y\|^2 \\
& \le (\|x\| + \ell(\gm|_{[0, T]}))^2 - \|y\|^2 \\
& \le (\|y\| +W_{\al, v}(\|x\|, \|y\|) + C_1)^2-\|y\|^2
\end{aligned}
\end{equation}
For simplicity let's write $W_{\al, v}(\|x\|, \|y\|)$ as $W_{\al, v}$. Then  
\begin{equation}
\label{eq;gmt'-z} \|\gm(t')-z\| \le \left(  \|y\| W_{\al, v} + \ey W^2_{\al, v} + C_1 (\|y\| + W_{\al, v} + \ey C_1) \right)^{\ey}.
\end{equation}
Meanwhile notice that 
$$ \ell(\gm|_{[0, t]}) \ge \|\gm(t) -x\| \ge r -\|x\|;$$
$$\ell(\gm|_{[t', T]}) \ge \|y -\gm(t')\| \ge \|y-z\| \ge \|y\|-r. $$ 
Then together with \eqref{eq;ell-gm<phi-h}, they imply
\begin{equation*}
\label{eq;l-gm-t-t'} 
\| \gm(t) -\gm(t')\|  \le \ell(\gm|_{[t, t']}) = \ell(\gm|_{[0, T]}) - \ell(\gm|_{[0, t]}) -\ell(\gm|_{[t', T]})  \le W_{\al, v}+ C_1. 
\end{equation*}
Combining this with \eqref{eq;gmt'-z}, we get
\begin{equation}
 \begin{aligned}
 \|\gm(t) - z\| & \le \|\gm(t) -\gm(t')\| + \|\gm(t')- z\| \\
 & \le \left(\|y\| W_{\al, v}+ \ey W^2_{\al, v} +  C_1\Big( \|y\| + W_{\al, v} + \ey C_1 \Big) \right)^{\ey} + W_{\al, v} + C_1. 
\end{aligned}
\end{equation}
Dividing the above inequality by $r=\|\gm(t)\|$, and using the fact that $\|y\| \le 2r$, we get
\begin{equation*}
\label{eq;gmt-z} \begin{aligned}
 \lnm \frac{\gm(t)}{\|\gm(t)\|} - u \rnm & \le \left( \frac{2 W_{\al, v}}{r} + \ey\left( \frac{W_{\al, v}}{r} \right)^2 + \frac{C_1 (2r + W_{\al, v}  + \ey C_1 )}{r^2} \right)^{\ey} + \frac{W_{\al, v}}{r} + \frac{C_1}{r} 
 \end{aligned}
\end{equation*}
From now on we use $W_{\al, v}$ to represent $ W_{\al, v}(\|x\|, 2r)$. As $\|y\| \le 2r$ implies $W_{\al, v}(\|x\|, \|y\|) \le W_{\al, v}(\|x\|, 2r)$, the above inequality still holds. Hence we may rewrite is as,
 $$ \lnm \frac{\gm(t)}{\|\gm(t)\|} - u \rnm \le \left( 4 \frac{W_{\al, v}}{2r} + 2 \left( \frac{W_{\al, v}}{2r} \right)^2 + \frac{C_1 (2r + W_{\al, v}  + \ey C_1 )}{r^2} \right)^{\ey} + 2\frac{W_{\al, v}}{2r} + \frac{C_1}{r} . $$
When $r \to \infty$, by \eqref{eq;W-al-v}, $\frac{W_{\al, v}}{2r} \to \infty$. While 
$$ 2\frac{W_{\al, v}}{2r} + \frac{C_1}{r} \simeq o \left(\sqrt{\frac{W_{\al, v}}{2r}}\right),$$
and
$$ \frac{C_1 (2r + W_{\al, v}  + \ey C_1 )}{r^2} \simeq \begin{cases}
o(\frac{W_{\al, v}}{2r}), & \text{ when } \al \in (0, 1]; \\
O(\frac{W_{\al, v}}{2r}), & \text{ when } \al \in (1, 2). 
\end{cases}
$$ 
As a result, when $R$ is large enough, there is a constant $C_2$ independent of $\gm$ with 
$$  
 \lnm \frac{\gm(t)}{\|\gm(t)\|} - u \rnm \le C_2 \left(\frac{W_{\al, v}(\|x\|, 2 r)}{2r} \right)^{\ey}.
$$
The rest then follows from Lemma \ref{lem;Action-xi} and direct computations.

For the case $x =0$, again by Lemma \ref{lem;phi-h-up-bound}, there is a constant $C_2$, such that 
$$ \sup\{ \phi_h(0, u); \; u \in \bb_{\dl \vv}(v) \} \le \sqrt{2h} C_2. $$
Using  Lemma \ref{lem;h-free-time} and \ref{lem;Action-xi} again, we get 
\begin{equation} \label{eq;phi-h-upp-0}
\phi_h(0, y) \le \phi_h(0,  u) + \phi_h(u, y) \le \sqrt{2h} \big(C_2 + \|y\| -1 + W_{\al, v}(1, \|y\|)\big).
\end{equation}
With this, we get the desired result by repeating the exact same argument as above.
\end{proof}

\begin{lem}
\label{lem;sum-f-al} $\lim_{n_0 \to \infty} \sum_{k=n_0}^{\infty} f_{\al}(2^k) =0$. 
\end{lem}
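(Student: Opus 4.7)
The plan is to show the stronger statement that $\sum_{k=0}^{\infty} f_\alpha(2^k)$ is a convergent series, from which the limit of the tails going to zero is immediate. Since $f_\alpha$ is defined by three distinct formulas depending on whether $\alpha>1$, $\alpha=1$, or $\alpha\in(0,1)$, I would treat these cases separately, in each case evaluating $f_\alpha(2^k)$ explicitly and comparing to a known convergent series.

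First, for $\alpha>1$, substituting $r=2^k$ gives $f_\alpha(2^k) = (2^{k+1})^{-1/2} = 2^{-(k+1)/2}$, which is a geometric series with ratio $2^{-1/2}<1$, hence convergent. For $\alpha\in(0,1)$, similarly $f_\alpha(2^k) = (2^{k+1})^{-\alpha/2} = 2^{-\alpha(k+1)/2}$, geometric with ratio $2^{-\alpha/2}<1$. The borderline case $\alpha=1$ is the only one that requires a second's thought: $f_1(2^k) = \bigl((k+1)(\log 2)/2^{k+1}\bigr)^{1/2}$, which is of order $\sqrt{k}\,2^{-k/2}$; this is summable by the ratio test (the ratio of consecutive terms tends to $2^{-1/2}$) or simply by comparison with $\sum k\cdot 2^{-k/2}$.

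Once convergence of $\sum_{k=0}^\infty f_\alpha(2^k)$ is established in each case, the conclusion $\lim_{n_0\to\infty}\sum_{k=n_0}^\infty f_\alpha(2^k)=0$ is just the standard fact that the tail of a convergent series tends to zero. There is no real obstacle here; the lemma is essentially a bookkeeping check confirming that the bounds produced in Lemma \ref{lem;dist-estimate} are summable along the dyadic sequence $\{2^k\}$, which is exactly what will be needed later to control $\|\gamma(t)/\|\gamma(t)\|-u\|$ uniformly as $t\to\infty$.
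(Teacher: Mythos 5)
Your proposal is correct and follows essentially the same case-by-case strategy as the paper. The only cosmetic difference is in the borderline $\alpha=1$ case: the paper crudely bounds $\log 2^{k+1}\le 2^{(k+1)/2}$ for $k$ large to fall back on an explicit geometric tail, whereas you compute $f_1(2^k)\asymp\sqrt{k}\,2^{-k/2}$ and invoke the ratio test or a comparison; both settle the matter in one line, and your framing (convergence of $\sum_{k\ge 0}f_\alpha(2^k)$ plus the standard ``tails of a convergent series vanish'') is marginally cleaner than the paper's direct tail evaluation.
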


\begin{proof}
First let's assume $\al \in (0, 1)$, then 
$$ \frac{f_{\al}(2^{k+1})}{f_{\al}(2^k)} = \frac{(2^{k+1})^{\frac{\al}{2}}}{(2^{k+2})^{\frac{\al}{2}}} =  \frac{1}{2^{\frac{\al}{2}}} < 1. $$
By the summation formula of geometric series, 
$$ \sum_{k =n_0}^{\infty} f_{\al}(2^k) = \frac{f_{\al}(2^{n_0})}{1 - 2^{-\frac{\al}{2}}} = \frac{1}{(2^{\frac{\al}{2}}-1)} \frac{1}{2^{\frac{\al}{2}n_0}} \to 0, \; \text{ as } n_0 \to \infty,$$
 
The proof is the same for $\al>1$, as in this case
$$ \frac{f_{\al}(2^{k+1})}{f_{\al}(2^k)} = \frac{(2^{k+1})^{\frac{1}{2}}}{(2^{k+2})^{\frac{1}{2}}} =  \frac{1}{2^{\frac{1}{2}}} < 1,$$
and 
$$ \sum_{k =n_0} \fal(2^k) = \frac{\fal(2^{n_0})}{1-2^{\ey}} = \frac{1}{\sqrt{2}-1} \frac{1}{2^{\frac{n_0}{2}}}  \to 0, \text{ as } n_0 \to \infty.
$$

Now assume $\al =1$. Since  there is an $n_0$ large enough, such that $\log 2^{k+1} \le 2^{\frac{k+1}{2}}$, $\forall k \ge n_0$, 
$$ \sum_{k=n_0}^{\infty} f_{\al}(2^{k}) \le \sum_{k =n_0}^{\infty} 2^{-\frac{k+1}{4}} = \frac{1}{2^{\frac{1}{4}}-1} \frac{1}{2^{\frac{n_0}{4}}} \to 0, \text{ as } n_0 \to \infty.$$
\end{proof}
 
\section{Proof of Theorem \ref{thm;hyper}} \label{sec; thm} 
By Lemma \ref{lem;sum-f-al}, we can find an $n_0$ large enough, such that
\begin{equation}
\label{eq;le-dv} 
 C_{\al,v} \sum_{k=n_0}^{\infty}\fal(2^k) \le  \dl \vv.
\end{equation}
For each $n \ge n_0$, set $y_n = 2^n v$, there is a $\gm_n \in \ac(x, y_n; T_n)$ satisfying $\A_h(\gm_n; 0, T_n) = \phi_h(x, y_n).$ 

Let $R$ be the constant given in Lemma \ref{lem;dist-estimate}, we will further assume $2^{n_0} \ge R$. By the strictly increasing property of $\|\gm_n(t)\|$ obtained in the proof of the same lemma, for each $n \ge n_0$, there is a unique sequence of moments $\{t_{n, k}\}_{k=n_0}^{n}$, such that 
$$ \|\gm_n(t_{n, k})\|= 2^k \; \text{ for each } \; k = n_0, n_0 +1, \dots, n.$$

\begin{lem}
\label{lem;dist-gmn-v} For each $n \ge n_0$ and $t \in [t_{n, n_0}, t_{n, n}]$, let $[\|\gm_n(t)\|]$ be the integer part of $\|\gm_n(t)\|$,  
\begin{equation} \label{eq;dist-gmn-v}
\lnm \frac{\gm_n(t)}{\|\gm_n(t)\|} -v \rnm \le C_{\al, v}\sum_{i =[\log\|\gm_n(t)\|]}^{\infty} f_{\al}(2^i) \le \dl \vv,
\end{equation}
\end{lem}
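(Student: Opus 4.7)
The plan is to apply Lemma \ref{lem;dist-estimate} iteratively along the dyadic scale $\{2^j\}$, working downward from $j=n$. For each $n_0 \le j \le n$, set
$$ u^{(j)} := \frac{\gm_n(t_{n,j})}{\|\gm_n(t_{n,j})\|} = \frac{\gm_n(t_{n,j})}{2^j},$$
so that $u^{(n)} = v$ by the definition of $y_n$. The key step I would prove first, by downward induction on $j$, is the estimate
$$ \|u^{(j)} - v\| \le C_{\al,v}\sum_{i=j}^{n-1} f_\al(2^i), \qquad n_0 \le j \le n, $$
which, combined with \eqref{eq;le-dv}, will in particular give $u^{(j)} \in \bb_{\dl\vv}(v)$ for every such $j$.

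For the induction, the crucial preliminary observation is that each sub-path $\gm_n|_{[0, t_{n,j+1}]}$ is itself an $h$-free-time minimizer from $x$ to $\gm_n(t_{n,j+1})$: any strictly cheaper competitor $\tilde\gm$ could be concatenated with $\gm_n|_{[t_{n,j+1}, T_n]}$ to contradict the minimality of $\gm_n$. Assuming inductively that $u^{(j+1)} \in \bb_{\dl\vv}(v)$, I would then apply Lemma \ref{lem;dist-estimate} to $\gm_n|_{[0, t_{n,j+1}]}$ with the role of $y$ played by $\gm_n(t_{n,j+1})$ and the role of $u$ played by $u^{(j+1)}$; the hypothesis $\|y\| \ge R$ is ensured by $2^{n_0} \ge R$, and the critical time labelled $T'$ in that lemma is precisely $t_{n,j}$ by strict monotonicity of $\|\gm_n(\cdot)\|$ together with $\ey\|y\| = 2^j$. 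The lemma then yields
$$ \lnm \frac{\gm_n(t)}{\|\gm_n(t)\|} - u^{(j+1)} \rnm \le C_{\al,v} f_\al(\|\gm_n(t)\|), \qquad t \in [t_{n,j}, t_{n,j+1}]. \qquad (\dagger)$$
Evaluating $(\dagger)$ at $t = t_{n,j}$ gives $\|u^{(j)} - u^{(j+1)}\| \le C_{\al,v} f_\al(2^j)$, and the triangle inequality closes the induction.

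For general $t \in [t_{n,n_0}, t_{n,n}]$, let $k = [\log_2 \|\gm_n(t)\|]$, so that $t \in [t_{n,k}, t_{n,k+1}]$. Combining $(\dagger)$ at index $k$ with the induction output at $j = k+1$, and using the fact that $f_\al$ is decreasing so $f_\al(\|\gm_n(t)\|) \le f_\al(2^k)$, I obtain
$$ \lnm \frac{\gm_n(t)}{\|\gm_n(t)\|} - v \rnm \le C_{\al,v} f_\al(2^k) + C_{\al,v}\sum_{i=k+1}^{n-1} f_\al(2^i) = C_{\al,v}\sum_{i=k}^{n-1} f_\al(2^i),$$
which is bounded above by $C_{\al,v}\sum_{i=k}^{\infty} f_\al(2^i)$; the second inequality $\le \dl\vv$ then follows from \eqref{eq;le-dv} since $k \ge n_0$.

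The main obstacle is the inductive bookkeeping: each application of Lemma \ref{lem;dist-estimate} requires the intermediate unit vector $u^{(j+1)}$ to lie in $\bb_{\dl\vv}(v)$, so the induction has to simultaneously track both the quantitative bound on $\|u^{(j)} - v\|$ and the qualitative membership in the ball. The choice of $n_0$ made via \eqref{eq;le-dv}, together with the summability from Lemma \ref{lem;sum-f-al}, is precisely what makes this self-sustaining; without it the dyadic cascade of triangle-inequality errors could exit the ball in which Lemma \ref{lem;dist-estimate} is valid.
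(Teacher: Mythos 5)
Your proof is correct and follows essentially the same route as the paper: a downward induction along the dyadic times $t_{n,j}$, applying Lemma \ref{lem;dist-estimate} to the restricted minimizer $\gm_n|_{[0,t_{n,j+1}]}$, closing each step with the triangle inequality, and using \eqref{eq;le-dv} to keep the accumulated error inside $\bb_{\dl\vv}(v)$. Your version is, if anything, slightly more explicit than the paper's in spelling out why the sub-path is itself an $h$-free-time minimizer and why $T'=t_{n,j}$, points the paper leaves implicit.
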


\begin{proof}
As $\|\gm_n(t)\| \ge 2^{n_0}$, $\forall t \in [t_{n, n_0}, t_{n, n}]$, the second inequality in \eqref{eq;dist-gmn-v} follows directly from \eqref{eq;le-dv}. 

Notice  that $2^{n-1} \le \|\gm_n(t)\| < 2^n$, when $t \in [t_{n, n-1}, t_{n, n})$. By Lemma \ref{lem;dist-estimate},
\begin{equation*}
\lnm \frac{\gm_n(t)}{\|\gm_n(t)\|} -  \frac{\gm_n(t_{n, n})}{\|\gm_n(t_{n, n})\|} \rnm \le C_{\al, v} f_{\al}(\|\gm_n(t)\|) \le C_{\al, v} \fal(2^{n-1}) \le   C_{\al, v} \sum_{i =[\log \|\gm(t)\|]}^{\infty} \fal(2^i). 
\end{equation*}
This proves \eqref{eq;dist-gmn-v}, when $t \in [t_{n, n-1}, t_{n, n})$. 

Now let's assume there is an integer $k \in (n_0, n)$, such that \eqref{eq;dist-gmn-v} holds for all $t \in [t_{n, k+1}, t_{n, n})$. With this we can apply Lemma \ref{lem;dist-estimate} to $\gm|_{[0, t_{n, k+1}]}$, and as a result, for any $t \in [t_{n, k}, t_{n, k+1})$,  
$$ \lnm \frac{\gm_n(t)}{\|\gm_n(t)\|} - \frac{\gm_n(t_{n, k+1})}{\| \gm_n(t_{n, k+1})\|} \rnm \le C_{\al, v} \fal(\|\gm(t)\|) \le C_{\al, v} \fal([\log \|\gm(t)\|]) = C_{\al, v} \fal(2^{k}). $$
As a result, for any $t \in [t_{n, k}, t_{n, k+1})$,  
$$ \begin{aligned}
\lnm \frac{\gm_n(t)}{\|\gm_n(t)\|} - v \rnm & \le \lnm \frac{\gm_n(t)}{\|\gm_n(t)\|} -\frac{\gm_n(t_{n, k+1})}{\|\gm_n(t_{n, k+1})\|} \rnm  + \lnm \frac{\gm_n(t_{n, k+1})}{\|\gm_n(t_{n, k+1})\|} -v \rnm \\
& \le C_{\al, v} \sum_{i = k}^{\infty} f_{\al}(2^i)  \le C_{\al, v} \sum_{i =[\log \|\gm_n(t)\|]}^{\infty} \fal(2^i).
\end{aligned}
$$
The desired result then follows from induction.   
\end{proof}

By the above lemma, $\frac{\gm_n(t)}{\|\gm_n(t)\|} \in \bb_{\dv}(v)$, for any $t \in [t_{n, n_0}, t_{n,n}]$. From the proof of Lemma \ref{lem;dist-estimate}, in particular \eqref{eq;phi-UppBnd} and \eqref{eq;phi-h-upp-0}, we know there is a constant $C_0>0$, such that 
\begin{equation} \label{eq;phi-h-gmt-UpBnd}
\begin{aligned}
\phi_h(x, \gm_n(t)) &  \le \phi_h \left(x, \|x\|\frac{\gm_n(t)}{\|\gm_n(t)\|} \right) + \phi_h \left(\|x\|\frac{\gm_n(t)}{\|\gm_n(t)\|}, \gm_n(t)\right) \\
& \le \sqrt{2h} \big(  C_0  + \|\gm_n(t)\|  + W_{\al, v}(\|x\|, \|\gm_n(t)\|) \big).
\end{aligned}
\end{equation}
Here and in the below, when $x=0$, $W_{\al, v}(\|x\|, \cdot)$ should be seen as $W_{\al, v}(1, \cdot)$. 

As the energy identity can be written as $\|\gmd_n\|^2 = L(\gm_n, \dot{\gm}_n) +h$, we get
\begin{equation}
\label{eq;gmd-sqare-UppBnd} \int_0^t \|\gmd_n(s)\|^2 \,ds = \phi_h(x, \gm_n(t)) \le \sqrt{2h} \big(  C_0  + \|\gm_n(t)\|  + W_{\al, v}(\|x\|, \|\gm_n(t)\|) \big);
\end{equation}  
Meanwhile the energy identity also implies  $\|\gmd_n\| \ge \sqrt{2h}$. Therefore
\begin{equation}
\sqrt{2h}t \le \int_0^t \|\gmd_n \| \,ds \le \sqrt{t} \left( \int_0^t \|\gmd_n\|^2 \,ds \right)^{\ey} \le (\sqrt{2h}t)^{\ey} \big( \|\gm_n(t) \| + W_{\al, v} +C_0  \big)^{\ey}.
\end{equation}
As a result, 
\begin{equation}
 \label{eq;lower-bound} \frac{\sqrt{2h}t}{\|\gm_n(t) \|} \le 1 + \frac{W_{\al, v} + \beta_{x,v}}{\|\gm_n(t)\|} \Rightarrow \frac{\|\gm_n(t)\|}{\sqrt{2h}t} - 1 \ge - \frac{W_{\al, v} + C_0}{\|\gm_n(t)\| + W_{\al, v} + C_0}. 
 \end{equation} 

On the other hand, \eqref{eq;gmd-sqare-UppBnd} also implies
$$ \frac{1}{t} \| \gm_n(t) -x\|^2 \le \frac{1}{t}\left( \int_0^t \|\gmd_n\| \,ds \right)^2 \le \int_0^t \|\gmd_n\|^2 \,ds \le \sqrt{2h}(\|\gm_n(t) \| + W_{\al, v} +C_0).
$$
Multiplying the above inequality by $\|\gm_n(t)\|/(\sqrt{2h}\|\gm_n(t)-x\|^2)$, we get 
$$ \begin{aligned}
\frac{\|\gm_n(t)\|}{\sqrt{2h}t} & \le \frac{ \|\gm_n(t)\| (\|\gm_n(t)\| + W_{\al, v}+ C_0)}{\|\gm_n(t) -x\|^2} \\
& \le \frac{ (\|\gm_n(t)-x\| +\|x\|) (\|\gm_n(t)-x\|+\|x\| + W_{\al, v}+ C_0)}{\|\gm_n(t) -x\|^2}.
\end{aligned}
$$ 
As a result, 
\begin{equation}
\label{eq;upp-bound} \frac{\|\gm_n(t)\|}{\sqrt{2h}t} - 1 \le \frac{W_{\al, v}+ C_0 + 2\|x\|}{\|\gm_n(t)-x\|} + \frac{\|x\|(W_{\al, v} + C_0+ \|x\|)}{\|\gm_n(t) -x\|^2}. 
\end{equation}
By \eqref{eq;lower-bound} and \eqref{eq;upp-bound}, when $n_0$ is large enough, we can find a continuous function $g_{\al}: [2^{n_0}, \infty)$,
\begin{equation}
\label{eq;g-al}  g_{\al}(r) = \begin{cases}
O(\frac{1}{r}), & \text{ if } \al >1; \\
O(\frac{\log r}{r}), & \text{ if } \al =1; \\
O(\frac{1}{r^{\al}}), & \text{ if } \al \in (0, 1),
\end{cases} \text{ as } r \to \infty
\end{equation}
such that, for any $t \in [t_{n, n_0}, t_{n, n}]$,
\begin{equation}
\label{eq:gmn/t}  \left| \frac{\|\gm_n(t)\|}{\sqrt{2h}t} - 1 \right| \le g_{\al}(\|\gm_n(t)\|).
\end{equation}
 
By \eqref{eq;g-al} and \eqref{eq; gmnt/t}, we can assume the following holds 
\begin{equation} \label{eq; gmnt/t}
\ey \le \frac{\|\gm_n(t)\|}{\sqrt{2h}t} \le 2, \;\; \forall t \in [t_{n, n_0}, t_{n, n}] \text{ and } n \ge n_0. 
\end{equation}
The second inequality in \eqref{eq; gmnt/t} then implies 
$$ T_n =t_{n, n} \ge \frac{\|\gm_n(t_{n, n})\|}{2\sqrt{2h}} =\frac{2^{n}}{2\sqrt{2h}} \to \infty, \; \text{ as } n \to \infty. 
$$
On the other hand, the first inequality in  \eqref{eq; gmnt/t} implies 
$$ t_{n, n_0} \le \frac{2}{\sqrt{2h}} \|\gm_n(t_{n, n_0})\| = \frac{2^{n_0+1}}{\sqrt{2h}}, \;\; \forall n \ge n_0. $$
Hence $\ttl_{n_0} =\sup\{t_{n, n_0}:\; n \ge n_0\}$ is a finite number. 

Now fix an arbitary $t \ge \ttl_{n_0}$, by Lemma \ref{lem;dist-gmn-v} $\frac{\gm_n(t)}{\|\gm_n(t)\|} \in \bb_{\dv}(v)$. Then \eqref{eq; gmnt/t} implies  
\begin{equation}
\label{eq;gmnt-norm<t} 2^{n_0} \le \|\gm_n(t)\| \le 2 \sqrt{2h}t, \;\; \forall n \ge n_0.
\end{equation}
Combining the second inequality above with \eqref{eq;phi-h-gmt-UpBnd} gives us 
$$  
\phi_h(x, \gm_n(t)) \le 4ht+  \sqrt{2h} \big(W_{\al, v}(\|x\|, \sqrt{2h}t) + C_0 \big).
$$
We may further assume $n_0$ large enough, such that 
$$ W_{\al, v}(\|x\|, \sqrt{2h}t) + C_0 \le 2\sqrt{2h} t, \; \text{ when } \sqrt{2h}t \ge 2^{n_0}. $$
Hence $\A_h(\gm_n; 0, t) = \phi_h(x, \gm_n(t)) \le 8ht$, $\forall n \ge n_0$. Then for any $0 \le \tau < \tau' \le t$, 
$$ \begin{aligned}
\|\gm_n(\tau') -\gm_n(\tau) \| & \le \int_{\tau}^{\tau'} \|\gmd_n\| \,ds \le (\tau' -\tau)^{\ey} \left( \int_{\tau}^{\tau'} \|\gmd_n\|^2 \,ds \right)^{\ey} \\
&  \le (\tau' -\tau)^{\ey} \big(\A_h(\gm; 0, t)\big)^{\ey}  \le 2\sqrt{2h}(\tau'-\tau)^{\ey} t^{\ey}. 
\end{aligned}$$
This shows $\{\gm_n|_{[0, t]}: n \ge n_0 \}$ is equicontinuous. Since $\gm_n(0) = x$,  $\forall n \ge n_0$, it is also equibounded. By the Ascoli-Arzela theorem (after passing to a proper subsequence) $\{\gm_n|_{[0, t]}; n \ge n_0\}$ will converge uniformly to an absolute continuous path $\xi: [0, t] \to \E$. 

Since the above argument can be repeated for any $\tau_k \ge \ttl_{n_0}$ with $\lim_{k \to \infty} \tau_k= \infty$, by a diagonal argument there exists an absolute continuous path $\gm: [0, \infty) \to \E$, such that $\gm_n$ converges to $\gm$ uniformly on any compact sub-interval of $[0, \infty)$. 

For any $T>0$, by the lower semi-continuity of $\A_h$ and the continuity of $\phi_h$ (Lemma \ref{lem;phi-h-cont}),
$$ \phi_h(x, \gm(T)) \le \A_h(\gm; 0, T) \le \liminf_{n \to \infty} \A_h(\gm_n; 0, T) = \liminf_{n \to \infty} \phi_h(x, \gm_n(T)) = \phi_h(x, \gm(T)). $$
Hence $\gm|_{[0, T]}$ is an $h$-free-time minimizer, and then a collision-free $h$-energy solution of \eqref{eq;Nbody}. 

The above results show that for any $t \ge \ttl_{n_0}$,
$$ \ey \le \frac{\|\gm(t)\|}{\sqrt{2h}t} = \lim_{n \to \infty} \frac{\|\gm_n(t)\|}{\sqrt{2h}t} \le 2;$$
$$ \lnm \frac{\gm(t)}{\|\gm(t)\|} -v\rnm = \lim_{n \to \infty} \lnm \frac{\gm_n(t)}{\|\gm_n(t)\|} -v\rnm \le \dv; 
$$
$$ \left| \frac{\gm(t)}{\sqrt{2h}t}-1 \right| = \lim_{n \to \infty} \left| \frac{\gm_n(t)}{\sqrt{2h}t}-1 \right|  \le  \lim_{n \to \infty}g_{\al} (\|\gm_n(t)\|) \le g_{\al}(\|\gm(t)\|). $$
With these estimates, we get
$$ \begin{aligned}
 \lim_{t \to \infty} \frac{\|\gm(t) - \sqrt{2h}t v\|}{\sqrt{2h}t} & = \lim_{t \to \infty} \lim_{n \to \infty} \frac{\|\gm_n(t) - \sqrt{2h}t v\|}{\sqrt{2h}t} \\
 & \le \lim_{t \to \infty} \lim_{n \to \infty} \left( \frac{\big\| \gm_n(t) -\|\gm_n(t)\|v  \big\|}{\sqrt{2h}t} + \frac{\big\| \|\gm_n(t)\|v - \sqrt{2h}tv \big\|}{\sqrt{2h}t} \right) \\
 & \le \lim_{t \to \infty} \lim_{n \to \infty} \left( \frac{\|\gm_n(t)\|}{\sqrt{2h}t} \lnm \frac{\gm_n(t)}{\|\gm_n(t)\|} -v \rnm + \left|\frac{\|\gm_n(t)\|}{\sqrt{2h}t} -1 \right|  \right) \\
 & = \lim_{t \to \infty} \left(\frac{\|\gm(t)\|}{\sqrt{2h}t} \lnm \frac{\gm(t)}{\|\gm(t)\|} -v \rnm + \left|\frac{\|\gm(t)\|}{\sqrt{2h}t} -1 \right| \right) \\
 & \le \lim_{t \to \infty} \left( 2 \cal \sum_{i = [\log \|\gm(t)\|]}^{\infty} \fal(2^{i}) + g_{\al}(\|\gm(t)\|) \right) =0. 
\end{aligned}
$$ 
This means $\gm(t) = \sqrt{2h}tv +o(t)$, when $t \to \infty$, which finishes our proof of Theorem \ref{thm;hyper}.

\noindent{\bf Acknowledgement.} The author thanks Duokui Yan and Yuan Zhou for their encouragement and helpful comments, and the anonymous referees for their value comments and suggestions.

\bibliographystyle{abbrv}
\bibliography{RefScattering}

\end{document}